\newcommand{\Bis}{\mathop{\mathrm{Bis}}\nolimits}
\newcommand{\dom}{\mathop{\boldsymbol d}}
\newcommand{\ran}{\mathop{\boldsymbol r}}
\newcommand{\Sh}{\mathop{\mathrm{Sh}}}
\newcommand{\modu}{\mathrm{mod}\text{-}}
\newcommand{\inv}{^{-1}}
\newcommand{\til}[1]{\ensuremath{\widetilde {#1}}}
\newcommand{\skel}[1]{^{(#1)}}
\newtheorem{Thm}{Theorem}[section]
\newtheorem{Prop}[Thm]{Proposition}
\newtheorem{Lemma}[Thm]{Lemma}
{\theoremstyle{definition}
}
{\theoremstyle{remark}
}
\newtheorem{Cor}[Thm]{Corollary}
{\theoremstyle{remark}
}
{\theoremstyle{remark}
}
\theoremstyle{remark}
\theoremstyle{remark}
\theoremstyle{remark}
\numberwithin{equation}{section}
\title{Modules over \'etale groupoid algebras as sheaves}
\author{Benjamin Steinberg}
\address{%
    Department of Mathematics\\
    City College of New York\\
    Convent Avenue at 138th Street\\
    New York, New York 10031\\
    USA}
\email{bsteinberg@ccny.cuny.edu}
\thanks{This work was partially supported by a grant from the Simons Foundation(\#245268
to Benjamin Steinberg) and the Binational Science Foundation of Israel and the US (\#2012080 to Benjamin Steinberg).}
\date{May 31, 2014}
\keywords{\'etale groupoids, groupoid algebras, sheaves}
\subjclass[2010]{16S99,16S10, 22A22, 18F20}
\begin{document}

\begin{abstract}
The author has previously associated to each commutative ring with unit $\Bbbk$ and \'etale groupoid $\mathscr G$ with locally compact, Hausdorff, totally disconnected unit space a $\Bbbk$-algebra $\Bbbk\mathscr G$.  The algebra $\Bbbk\mathscr G$ need not be unital, but it always has local units.  The class of groupoid algebras includes group algebras, inverse semigroup algebras and Leavitt path algebras.  In this paper we show that the category of unitary $\Bbbk\mathscr G$-modules is equivalent to the category of sheaves of $\Bbbk$-modules over $\mathscr G$. As a consequence we obtain a new proof of a recent result that Morita equivalent groupoids have Morita equivalent algebras.
\end{abstract}

\maketitle

\section{Introduction}
In an effort to obtain a uniform theory for group algebras, inverse semigroup algebras and Leavitt path algebras~\cite{Leavitt}, the author~\cite{discretegroupoid} associated to each commutative ring with unit $\Bbbk$ and \'etale groupoid $\mathscr G$ with locally compact, totally disconnected unit space a $\Bbbk$-algebra $\Bbbk\mathscr G$ (see also~\cite{theirgroupoid}). These algebras are discrete versions of groupoid $C^*$-algebras~\cite{renault,Paterson} and a number of analogues of results from the operator theoretic setting have been obtained in this context. In particular, Cuntz-Krieger uniqueness theorems~\cite{simplicity1,simplicity2}, characterizations of simplicity~\cite{simplicity1,simplicity2} and the connection of groupoid Morita equivalence to Morita equivalence of algebras~\cite{GroupoidMorita} have  been proven for groupoid algebras under the Hausdorff assumption.

In this paper, we prove a discrete analogue of Renault's disintegration theorem~\cite{renaultdisintegration}, which roughly states that representations of groupoid $C^*$-algebras are obtained by integrating representations of the groupoid. A representation of a groupoid consists of a field of Hilbert spaces over the unit space with an action of the groupoid by unitary transformations on the fibers~\cite{renault,Paterson}.  

Here we prove that the category of unitary $\Bbbk\mathscr G$-modules is equivalent to the category of sheaves of $\Bbbk$-modules over $\mathscr G$.  This simultaneously generalizes the following two well-known facts: if $X$ is a locally compact, totally disconnected space and $C_c(X,\Bbbk)$ is the ring of locally constant functions $X\to \Bbbk$ with compact support, then the category of unitary $C_c(X,\Bbbk)$-modules is equivalent to the category of sheaves on $X$ (cf.~\cite{Piercespectrum}); and if $\mathscr G$ is a discrete groupoid, then the category of unitary $\Bbbk\mathscr G$-modules is equivalent to the category of functors from $\mathscr G$ to the category of $\Bbbk$-modules (cf.~\cite{ringoids}).  In the context of \'etale Lie groupoids and convolution algebras of smooth functions, analogous results can be found in~\cite{liecase}. However, the techniques in the totally disconnected case setting are quite different.

As a consequence of our results, we obtain a new proof that Morita equivalent groupoids have Morita equivalent groupoid algebras, which the author feels is more conceptual than the one in~\cite{GroupoidMorita} (since it works  with module categories rather than Morita contexts), and at the same time does not require the Hausdorff hypothesis.  

We hope that this geometric realization of the module category will prove useful in the study of simple modules, primitive ideals and in other contexts analogous to those in which Renault's disintegration theorem is used in operator theory.

\section{\'Etale groupoids}
In this paper, a topological space will be called compact if it is Hausdorff and satisfies the property that every open cover has a finite subcover.

\subsection{Topological groupoids}
A topological groupoid $\mathscr G$ is a groupoid (i.e., a small category each of whose morphisms is an isomorphism) whose unit space $\mathscr G\skel 0$ and arrow space $\mathscr G\skel 1$ are topological spaces and whose domain map $\dom$, range map $\ran$, multiplication map, inversion map and unit map $u\colon \mathscr G\skel 0\to \mathscr G\skel 1$ are all continuous.  Since $u$ is a homeomorphism with its image, we identify elements of $\mathscr G\skel 0$ with the corresponding identity arrows and view $\mathscr G\skel 0$ as a subspace of $\mathscr G\skel 1$ with the subspace topology.  We write $\mathscr G\skel 2$ for the space of composable arrows $(g,h)$ with $\dom(g)=\ran(h)$.

A topological groupoid $\mathscr G$ is \emph{\'etale} if $\dom$ is a local homeomorphism.  This implies that $\ran$ and the multiplication map are local homeomorphisms  and that $\mathscr G\skel 0$ is open in $\mathscr G\skel 1$~\cite{resendeetale}.  Note that the fibers of $\dom$ and $\ran$ are discrete in the induced topology.  A \emph{local bisection} of $\mathscr G$ is an open subset $U\subseteq \mathscr G\skel 1$ such that $\dom|_U$ and $\ran|_U$ are homeomorphisms to their images.  The set of local bisections of $\mathscr G$, denoted $\Bis(\mathscr G)$, is a basis for the topology on $\mathscr G\skel 1$~\cite{resendeetale,Paterson,Exel}.  If $U,V$ are local bisections, then
\begin{align*}
UV&=\{uv\mid u\in U,v\in V\}\\
U\inv &= \{u\inv\mid u\in U\}
\end{align*}
are local bisections. In fact, $\Bis(\mathscr G)$ is an inverse semigroup~\cite{Lawson}.

An \'etale groupoid is said to be \emph{ample}\cite{Paterson} if $\mathscr G\skel 0$ is Hausdorff and has a basis of compact open sets.  In this case $\mathscr G\skel 1$ is locally Hausdorff but need not be Hausdorff.  Let $\Bis_c(\mathscr G)$ denote the set of compact open local bisections of $\mathscr G$.  Then $\Bis_c(\mathscr G)$ is an inverse subsemigroup of $\Bis(\mathscr G)$ and is a basis for the topology of $\mathscr G\skel 1$~\cite{Paterson}.

\subsection{$\mathscr G$-sheaves and Morita equivalence}
Let $\mathscr G$ be an \'etale groupoid. References for this section are~\cite{classifyingtoposI,classifyingtoposII,mrcun,Moerdijkgroupoid,groupoidhomology,elephant1, elephant2}.  A \emph{(right) $\mathscr G$-space} consists of a space $E$, a continuous map $p\colon E\to \mathscr G\skel 0$ and an action map $E\times_{\mathscr G\skel 0} \mathscr G\skel 1\to E$ (where the fiber product is with respect to $p$ and $\ran$), denoted $(x,g)\mapsto xg$ satisfying the following axioms:
\begin{itemize}
\item $ep(e)=e$ for all $e\in E$;
\item $p(eg)=\dom(g)$ whenever $p(e)=\ran(g)$;
\item $(eg)h=e(gh)$ whenever $p(e)=\ran(g)$ and $\dom(g)=\ran(h)$.
\end{itemize} 
Left $\mathscr G$-spaces are defined dually.

A $\mathscr G$-space $(E,p)$ is said to be
\emph{principal} if the natural map \[E\times_{\mathscr G\skel 0} \mathscr G\skel 1\to E\times_{\mathscr G\skel 0} E\] given by $(e,g)\mapsto (eg,e)$ is a homeomorphism. A \emph{morphism} $(E,p)\to (F,q)$ of $\mathscr G$-spaces is a continuous map $\varphi\colon E\to F$ such that
\[\xymatrix{E\ar[rd]_p\ar[rr]^\varphi &  &F\ar[ld]^q\\ & \mathscr G\skel 0 &}\] commutes and $\varphi(eg)=\varphi(e)g$ whenever $p(e)=\ran(g)$.

Morita equivalence plays an important role in groupoid theory.  There are a number of different, but equivalent, formulations of the notion.  See~\cite{classifyingtoposI,classifyingtoposII,mrcun,Moerdijkgroupoid,elephant1, elephant2,groupoidhomology} for details.
Two topological groupoids $\mathscr G$ and $\mathscr H$ are said to be \emph{Morita equivalent} if there is a topological space $E$ with the structure of a principal left $\mathscr G$-space $(E,p)$ and a principal right $\mathscr H$-space $(E,q)$ such that $p,q$ are open surjections and the actions commute, meaning $p(eh)=p(e)$, $q(ge)=q(e)$ and $(ge)h=g(eh)$ whenever $g\in \mathscr G\skel 1$, $h\in \mathscr H\skel 1$ and $\dom(g)=p(e)$, $\ran(h)=q(e)$.

A continuous functor $f\colon \mathscr G\to \mathscr H$ of \'etale groupoids is called an \emph{essential equivalence} if $\dom\pi_2\colon \mathscr G\skel 0\times_{\mathscr H\skel 0}\times \mathscr H\skel 1\to \mathscr H\skel 0$ is an open surjection (where the fiber product is over $f$ and $\ran$) and the square
\[\xymatrix{\mathscr G\skel 1\ar[rr]^f\ar[d]_{(\dom,\ran)} && \mathscr H\skel 1\ar[d]^{(\dom,\ran)}\\ \mathscr G\skel 0\times \mathscr G\skel 0\ar[rr]_{f\times f} && \mathscr H\skel 0\times \mathscr H\skel 0} \] is a pullback.  The first condition corresponds to being essentially surjective and the second to being fully faithful in the discrete context. \'Etale groupoids $\mathscr G$ and $\mathscr H$ are Morita equivalent if and only if there is an \'etale groupoid $\mathscr K$ and essential equivalences $f\colon \mathscr K\to \mathscr G$ and $f'\colon \mathscr K\to \mathscr H$.

If $\mathscr G$ is an \'etale groupoid, then a \emph{$\mathscr G$-sheaf} consists of a $\mathscr G$-space $(E,p)$ such that $p\colon E\to \mathscr G\skel 0$ is a local homeomorphism (the tradition to use right actions is standard in topos theory).  The fiber $p\inv(x)$ of $E$ over $x$ is denoted $E_x$ and is called the \emph{stalk} of $E$ at $x$.  A morphism of $\mathscr G$-sheaves is just a morphism of $\mathscr G$-spaces; note, however, that the corresponding map of total spaces must necessarily be a local homeomorphism. The category $\mathcal B\mathscr G$ of all $\mathscr G$-sheaves is called the \emph{classifying topos} of $\mathscr G$~\cite{elephant1, elephant2}.

If $A$ is a set, then the constant $\mathscr G$-sheaf $\Delta(A)$ is $(A\times \mathscr G\skel 0,\pi_2)$ with action $(a,\ran(g))g=(a,\dom(g))$ (where $A$ is endowed with the discrete topology). As a sheaf over $\mathscr G\skel 0$, note that $\Delta(A)$ is nothing more than the sheaf of locally constant $A$-valued functions on $\mathscr G\skel 0$. (Recall that a locally constant function from a topological space $X$ to a set $A$ is just a continuous map $X\to A$ where $A$ is endowed with the discrete topology.) The functor $\Delta\colon \mathbf{Set}\to \mathcal B\mathscr G$ is exact and hence sends rings to internal rings of $\mathcal B\mathscr G$.  If $\Bbbk$ is a commutative ring with unit, then a \emph{$\mathscr G$-sheaf of $\Bbbk$-modules} is by definition an internal $\Delta(\Bbbk)$-module in $\mathcal B\mathscr G$. Explicitly, this amounts to a $\mathscr G$-sheaf $(E,p)$ together a $\Bbbk$-module structure on each stalk $E_x$ such that:
\begin{itemize}
\item the zero section, denoted $0$, sending $x\in \mathscr G\skel 0$ to the zero of $E_x$ is continuous;
\item addition $E\times_{\mathscr G\skel 0} E\to E$ is continuous;
\item scalar multiplication $K\times E\to E$ is continuous;
\item for each $g\in \mathscr G\skel 1$, the map $R_g\colon E_{\ran(g)}\to E_{\dom(x)}$ given by $R_g(e) = eg$ is $\Bbbk$-linear;
\end{itemize}
where $\Bbbk$ has the discrete topology in the third item.
Note that the first three conditions are equivalent to $(E,p)$ being a sheaf of $\Bbbk$-modules over $\mathscr G\skel 0$.

Internal $\Delta(\Bbbk)$-module homomorphisms are just $\mathscr G$-sheaf morphisms
 \[\xymatrix{E\ar[rd]_p\ar[rr]^\varphi &  &F\ar[ld]^q\\ & \mathscr G\skel 0 &}\]
 which restrict to $\Bbbk$-module homomorphisms on the stalks. The category of $\mathscr G$-sheaves of $\Bbbk$-modules will be denoted $\mathcal B_{\Bbbk}\mathscr G$.

It follows from standard topos theory that if $\mathcal B\mathscr G$ is equivalent to $\mathcal B\mathscr H$, then the equivalence commutes with the constant functor (up to isomorphism) and hence yields an equivalence of categories  $\mathcal B_{\Bbbk}\mathscr G$ and  $\mathcal B_{\Bbbk}\mathscr H$. Indeed, $\Delta$ is left adjoint to the hom functor out of the terminal object and equivalences preserve terminal objects.

Moerdijk proved that if $\mathscr G$ and $\mathscr H$ are \'etale groupoids, then $\mathcal B\mathscr G$ is equivalent to $\mathcal B\mathscr H$ if and only if $\mathscr G$ and $\mathscr H$ are Morita equivalent groupoids~\cite{classifyingtoposI,classifyingtoposII,Moerdijkgroupoid,groupoidhomology,elephant1, elephant2}.  Hence Morita equivalent groupoids have equivalent categories of sheaves of $\Bbbk$-modules for any base ring $\Bbbk$.

\subsection{Groupoid algebras}
Let $\mathscr G$ be an ample groupoid and $\Bbbk$ a commutative ring with unit.  Define $\Bbbk\mathscr G$ to be the $\Bbbk$-submodule of $\Bbbk^{\mathscr G\skel 1}$ spanned by the characteristic functions $\chi_U$ with $U\in \Bis_c(\mathscr G)$.  If $\mathscr G\skel 1$ is Hausdorff, then $\Bbbk \mathscr G$ consists precisely of the locally constant functions $\mathscr G\skel 1\to \Bbbk$ with compact support; otherwise, it is the $\Bbbk$-submodule spanned by those functions $f\colon \mathscr G\skel 1\to \Bbbk$ that vanish outside some Hausdorff open subset $U$ with $f|_U$ locally constant with compact support.  See~\cite{discretegroupoid,groupoidarxiv,theirgroupoid} for details.

The convolution product on $\Bbbk \mathscr G$, defined by
\[f_1\ast f_2(g)=\sum_{\dom(h)=\dom(g)} = f_1(gh\inv)f_2(h), \] turns $\Bbbk \mathscr G$ into a $\Bbbk$-algebra.  Note that the sum is finite because the fibers of $\dom$ are closed and discrete, and $f_1,f_2$ are linear combinations of functions with compact support. We often just write for convenience $f_1f_2$ instead of $f_1\ast f_2$.  One has that $\chi_U\chi_V=\chi_{UV}$ for $U,V\in \Bis_c(\mathscr G)$; see~\cite{discretegroupoid}.  If $\mathscr G\skel 1=\mathscr G\skel 0$, then the convolution product is just pointwise multiplication and so $\Bbbk \mathscr G$ is just the usual ring of locally constant functions $\mathscr G\skel 0\to \Bbbk$ with compact support.

The ring $\Bbbk \mathscr G$ is unital if and only if $\mathscr G\skel 0$ is compact~\cite{discretegroupoid}. However, it is very close to being unital in the following sense. A ring $R$ is said to have \emph{local units} if it is a direct limit of unital rings in the category of not necessarily unital rings (that is, the homomorphisms in the directed system do not have to preserve the identities).  Equivalently, $R$ has local units if, for any finite subset $r_1,\ldots, r_n$ of $R$, there is an idempotent $e\in R$ with $er_i=r_i=r_ie$ for $i=1,\ldots, n$~\cite{Abrams,Laci}. Denote by $E(R)$ the set of idempotents of $R$.

\begin{Prop}\label{p:directedunion}
Let $\mathscr G$ be an ample groupoid and $\Bbbk$ a commutative ring with units.  Let $\mathcal B$ denote the generalized boolean algebra of compact open subsets of $\mathscr G\skel 0$. If $U\in \mathcal B$, let $\mathscr G_U=(U,\dom\inv(U)\cap \ran\inv(U))$.
\begin{enumerate}
\item $\mathcal B$ is directed.
\item If $U\in \mathcal B$, then $\mathscr G_U$ is an open ample subgroupoid of $\mathscr G$.
\item If $U\in \mathcal B$, then $\chi_U\cdot \Bbbk \mathscr G\cdot \chi_U\cong \Bbbk \mathscr G_U$.
\item $\Bbbk \mathscr G=\bigcup_{U\in \mathcal B}\chi_U\cdot\Bbbk \mathscr G\cdot\chi_U = \varinjlim_{U\in \mathcal B} \Bbbk \mathscr G_U$.
\end{enumerate}
In particular, $\Bbbk\mathscr G$ has local units.
\end{Prop}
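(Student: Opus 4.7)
Part (1) is immediate: if $U,V\in\mathcal{B}$, then $U\cup V$ is a finite union of compact open subsets of the Hausdorff space $\mathscr{G}\skel 0$, hence compact open. For (2), $\dom\inv(U)\cap\ran\inv(U)$ is open by continuity of $\dom$ and $\ran$, it is closed under inversion (which swaps $\dom$ and $\ran$) and under composition (directly from the groupoid axioms), so it is a subgroupoid. It inherits the étale property because restrictions of $\dom$ and $\ran$ to an open subset remain local homeomorphisms, and ampleness is inherited since $U$, as an open subspace of $\mathscr{G}\skel 0$, is Hausdorff with a basis of compact opens.

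\textbf{Algebra identification (3).} The key computation is
\[
\chi_U\chi_V\chi_U=\chi_{V\cap\mathscr{G}_U\skel 1}\qquad\text{for every }V\in\Bis_c(\mathscr{G}).
\]
This follows from the identity $\chi_U\chi_V=\chi_{UV}$ together with the observation that, because $U\subseteq\mathscr{G}\skel 0$ consists of identities, $UV=V\cap\ran\inv(U)$ and dually $VU=V\cap\dom\inv(U)$. Every element of $\Bis_c(\mathscr{G}_U)$ arises as $V\cap\mathscr{G}_U\skel 1$ for some $V\in\Bis_c(\mathscr{G})$, so I would define the map $\Bbbk\mathscr{G}_U\to\chi_U\cdot\Bbbk\mathscr{G}\cdot\chi_U$ by sending each generator $\chi_W$ with $W\in\Bis_c(\mathscr{G}_U)$ to the same function $\chi_W$ regarded in $\Bbbk\mathscr{G}$ (namely, extended by zero outside $\mathscr{G}_U\skel 1$). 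Surjectivity is immediate from the displayed identity, multiplicativity from $\chi_W\chi_{W'}=\chi_{WW'}$ holding identically in both algebras (the set $WW'$ is intrinsic to $\mathscr{G}_U$), and injectivity from the fact that the map is literally extension by zero of functions.

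\textbf{Direct limit and local units.} For (4), given $f=\sum_{i=1}^n a_i\chi_{V_i}\in\Bbbk\mathscr{G}$ with each $V_i\in\Bis_c(\mathscr{G})$, set
\[
U=\bigcup_{i=1}^n\bigl(\dom(V_i)\cup\ran(V_i)\bigr).
\]
Each $\dom(V_i)$ and $\ran(V_i)$ is compact open, since $V_i$ is compact and $\dom|_{V_i},\ran|_{V_i}$ are homeomorphisms onto their images, so $U\in\mathcal{B}$. By (3), $\chi_U\chi_{V_i}\chi_U=\chi_{V_i}$ for every $i$, hence $\chi_Uf\chi_U=f$. This proves $\Bbbk\mathscr{G}=\bigcup_{U\in\mathcal{B}}\chi_U\cdot\Bbbk\mathscr{G}\cdot\chi_U$; combined with the directedness from (1), it exhibits $\Bbbk\mathscr{G}$ as the directed colimit $\varinjlim_{U\in\mathcal{B}}\Bbbk\mathscr{G}_U$. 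The same argument applied simultaneously to any finite collection $f_1,\ldots,f_n$ produces a single $\chi_U$ acting as a two-sided identity on all of them, so $\Bbbk\mathscr{G}$ has local units.

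\textbf{Main obstacle.} The only potentially delicate step is the identification in (3), and specifically checking it in the non-Hausdorff case. I expect this to cause no real trouble because $\Bbbk\mathscr{G}$ is defined concretely as a submodule of $\Bbbk^{\mathscr{G}\skel 1}$, so the candidate isomorphism is literally extension by zero; multiplicativity and injectivity then reduce to pointwise verifications on $\mathscr{G}\skel 1$, and $\Bis_c(\mathscr{G}_U)$ sits naturally as an inverse subsemigroup of $\Bis_c(\mathscr{G})$ via the inclusion $W\mapsto W$.
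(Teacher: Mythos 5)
Your proof is correct and follows essentially the same route as the paper's: directedness via unions, identification of $\Bbbk\mathscr{G}_U$ with $\chi_U\cdot\Bbbk\mathscr{G}\cdot\chi_U$ by extension by zero, and absorption of each generator $\chi_V$ by $\chi_W$ with $W=\dom(V)\cup\ran(V)=V\inv V\cup VV\inv$. Your explicit computation $\chi_U\chi_V\chi_U=\chi_{V\cap\mathscr{G}_U^{(1)}}$ just makes the paper's terse argument for part (3) more transparent.
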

\begin{proof}
Clearly, $\mathcal B$ is directed since the union of two elements is their join.  Also $\mathscr G_U$ is an open ample subgroupoid of $\mathscr G$.  It follows that $\Bbbk \mathscr G_U$ can be identified with a subalgebra of $\Bbbk \mathscr G$ by extending functions on $\mathscr G_U\skel 1$ to be $0$ outside of $\mathscr G_U\skel 1$.  Since $\chi_U$ is the identity of $\Bbbk\mathscr G_U$ (cf.~\cite{discretegroupoid}), $\Bbbk \mathscr G_U$ is a unital subring of $\chi_U\cdot\Bbbk \mathscr G\cdot\chi_U$.  But if $f\in \Bbbk \mathscr G$ and $g\notin \mathscr G_U\skel 1$, then $(\chi_U\cdot f\cdot \chi_U)(g) =0$.  Thus $\chi_U\cdot\Bbbk \mathscr G\cdot\chi_U=\Bbbk\mathscr G_U$.

Let $R=\bigcup_{U\in \mathcal B}\chi_U\cdot\Bbbk\mathscr G_U\cdot\chi_U$.  Then $R$ is a $\Bbbk$-subalgebra of $\Bbbk \mathscr G$ because $\mathcal B$ is directed.  To show that $R$ is the whole ring, we just need to show it contains the spanning set $\chi_U$ with $U\in \Bis_c(\mathscr G)$.  Put $V=U\inv U\cup UU\inv$.  Then $V\in \mathcal B$ and $\chi_V\cdot\chi_U\cdot \chi_V=\chi_{VUV}=\chi_U$.
\end{proof}

Examples of groupoid algebras of ample groupoids include group algebras, Leavitt path algebras~\cite{theirgroupoid,GroupoidMorita} and inverse semigroup algebras~\cite{discretegroupoid}, as well as discrete groupoid algebras and certain cross product and partial action cross product algebras.  In general, groupoid algebras allow one to construct discrete analogues of a number of classical $C^*$-algebras that can be realized as $C^*$-algebras of ample groupoids~\cite{renault,Paterson,Exel}.

\section{The equivalence theorem}
Fix an ample groupoid $\mathscr G$ and a commutative ring with unit $\Bbbk$.  Our goal is to establish an equivalence between the category $\modu\Bbbk \mathscr G$ of unitary right $\Bbbk\mathscr G$-modules and the category $\mathcal B_\Bbbk\mathscr G$ of $\mathscr G$-sheaves of $\Bbbk$-modules. Let us recall the missing definitions.

If $R$ is a ring with local units, a right $R$-module $M$ is \emph{unitary} if $MR=M$, or  equivalently, for each $m\in M$, there is an idempotent $e\in E(R)$ with $me=m$.  We write $\modu R$ for the category of unitary right $R$-modules.  Two rings $R,S$ with local units are \emph{Morita equivalent} if $\modu R$ is equivalent to $\modu S$~\cite{Abrams,Laci,Simon}.  One can equivalently define Morita equivalence in terms of unitary left modules and in terms of Morita contexts~\cite{Abrams,Laci,Simon}.

Suppose that $R$ is a $\Bbbk$-algebra with local units.  Then we note that every unitary $R$-module is a $\Bbbk$-module and the $\Bbbk$-module structure is compatible with the $\Bbbk$-algebra structure.  Indeed, if $e\in E(R)$, then $Me$ is a unital $eMe$-module and hence a $\Bbbk$-module in the usual way.  As $M$ is unitary, it follows that $M$ is the directed union $\bigcup_{e\in E(M)}Me$ and hence a $\Bbbk$-module. More concretely, the $\Bbbk$-module structure is given as follows: if $c\in \Bbbk$ and $m\in M$, then $cm=m(ce)$ where $e$ is any idempotent such that $me=m$.    The $\Bbbk$-module structure is then automatically preserved by any $R$-module homomorphism, as in the case of unital rings.

Define a functor $\Gamma_c\colon \mathcal B_\Bbbk\mathscr G\to \modu\Bbbk\mathscr G$ as follows.  If $(E,p)$ is a $\mathscr G$-sheaf of $\Bbbk$-modules, then $\Gamma_c(E,p)$ is the set of all compactly supported (global) sections $s\colon \mathscr G\skel 0\to E$ of $p$ with pointwise addition.   We define a $\Bbbk \mathscr G$-module structure by \[(sf)(x) = \sum_{\dom(g)=x} f(g)s(\ran(g))g=\sum_{\dom(g)=x} f(g)R_g(s(\ran(g))). \]  As usual, the sum is finite because $f$ is a finite sum of functions with compact support and the fibers of $\dom$ are closed and discrete. It is easy to check that this makes $\Gamma_c(E,p)$ into a $\Bbbk\mathscr G$-module and that the induced $\Bbbk$-module structure is just the pointwise one.  The following observation is so fundamental that we shall often use it without comment throughout.

\begin{Prop}\label{p:bisecaction}
If $U\in \Bis_c(\mathscr G)$ and $s\in \Gamma_c(E,p)$, then
\[(s\chi_U)(x)=\begin{cases} s(\ran(g))g, & \text{if}\ g\in U,\ \dom(g)=x\\ 0,  & \text{if}\ x\notin U\inv U.\end{cases}\]  In particular, if $U\subseteq \mathscr G\skel 0$ is compact open, then $(s\chi_U)=\chi_U(x)s(x)$.
\end{Prop}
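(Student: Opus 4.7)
The proof is essentially an unpacking of the definition of the action on $\Gamma_c(E,p)$ combined with the defining property of a local bisection. I would proceed as follows.

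Starting from the definition
\[
(s\chi_U)(x) \;=\; \sum_{\dom(g)=x}\chi_U(g)\,s(\ran(g))\,g,
\]
I first observe that $\chi_U(g)$ equals $1$ when $g\in U$ and $0$ otherwise, so the sum collapses to one over $g\in U$ with $\dom(g)=x$. The crucial point is then that $U\in\Bis_c(\mathscr G)$, so $\dom|_U$ is a homeomorphism onto $\dom(U)$, and in particular injective. Hence for any $x\in\dom(U)$ there is a unique $g\in U$ with $\dom(g)=x$, giving the single term $s(\ran(g))g$; and for $x\notin\dom(U)$ the sum is empty, so $(s\chi_U)(x)=0$.

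It remains to identify $\dom(U)$ with $U\inv U$ as subsets of $\mathscr G\skel 0$. An element $u\inv v\in U\inv U$ requires $\ran(u)=\ran(v)$; since $\ran|_U$ is also injective (as $U$ is a bisection), this forces $u=v$, and then $u\inv v=u\inv u=\dom(u)\in\dom(U)$. Conversely, any $\dom(u)$ with $u\in U$ is of the form $u\inv u\in U\inv U$. This yields the displayed case distinction.

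For the second assertion, suppose $U\subseteq \mathscr G\skel 0$ is a compact open subset of the unit space. Then every $g\in U$ is a unit, so $\dom(g)=\ran(g)=g$ and $U\inv U=U$. If $x\in U$, the unique $g\in U$ with $\dom(g)=x$ is $g=x$ itself, and the axiom $ep(e)=e$ from the definition of a $\mathscr G$-space applied to $e=s(x)$ gives $s(x)\cdot x=s(x)$, so $(s\chi_U)(x)=s(x)=\chi_U(x)s(x)$; when $x\notin U$ both sides are $0$. There is no real obstacle here — the only point requiring some care is the identification $\dom(U)=U\inv U$, which is exactly where the full bisection hypothesis (injectivity of both $\dom|_U$ and $\ran|_U$) is used.
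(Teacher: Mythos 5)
Your proof is correct: the paper states this proposition without proof (as an immediate unpacking of the definition of the module structure), and your argument --- collapsing the convolution sum using injectivity of $\dom|_U$ and identifying $\dom(U)=U\inv U$ via injectivity of $\ran|_U$ --- is exactly the intended verification. No gaps.
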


The module $\Gamma_c(E,p)$ is unitary because if $s\colon \mathscr G\skel 0\to E$ has compact support, then we can find a compact open set $U$ containing the support of $s$ (just cover the support by compact open sets and take the union of a finite subcover).  Then one readily checks that $s\chi_U = s$ using Proposition~\ref{p:bisecaction}.

If \[\xymatrix{E\ar[rd]_p\ar[rr]^\varphi && F\ar[ld]^q\\ & \mathscr G\skel 0 &}\] is a morphism of $\mathscr G$-sheaves of $\Bbbk$-modules and $s\in \Gamma_c(E,p)$, then define $\Gamma_c(\varphi)(s) = \varphi\circ s$. It is straightforward to verify that $\Gamma_c$ is a functor.

Conversely, let $M$ be a unitary right $\Bbbk\mathscr G$-module.  We define a $\mathscr G$-sheaf $\Sh(M)=(\til M,p_M)$ in steps.  Recall that we have been using $\mathcal B$ to denote the generalized boolean algebra of compact open subsets of $\mathscr G\skel 0$. For each $U\in \mathcal B$, we can consider the $\Bbbk$-submodule $M(U)=M\chi_U$.  If $U\subseteq V$, then $M(U)=M\chi_U=M\chi_{UV}=M\chi_U\chi_V\subseteq M\chi_V=M(V)$.  Note that $M(U)$ is a $\Bbbk \mathscr G_U=\chi_U\cdot\mathscr G\cdot\chi_U$-module and since $\Bbbk \mathscr G=\bigcup_{U\in \mathcal B}\chi_U\cdot\mathscr G\cdot\chi_U=\varinjlim_{U\in \mathcal B}\Bbbk \mathscr G_U$, it follows that $M=\bigcup_{U\in \mathcal B}M(U) = \varinjlim_{U\in \mathcal B} M(U)$, where the latter has the obvious module structure coming from $\Bbbk \mathscr G=\varinjlim_{U\in \mathcal B}\Bbbk G_U$.

Let $x\in \mathscr G\skel 0$. If $x\in V\subseteq U$ with $U,V\in \mathcal B$, then we have a $\Bbbk$-module homomorphism $\rho^U_V\colon M(U)\to M(V)$ given by $m\mapsto m\chi_V$.  Since $\rho^U_U=1_{M(U)}$ and if $W\subseteq V\subseteq U$, we have $\rho^V_W\circ \rho^U_V=\rho^U_W$, it follows that we can form the direct limit $\Bbbk$-module $M_x=\varinjlim_{x\in U} M(U)$.  If $m\in M(U)$, we let $[m]_x$ denote the equivalence class of $m$ in $M_x$. Since $M=\bigcup_{U\in \mathcal B} M(U)$ and each element of $\mathcal B$ is contained in an element which contains $x$, it follows that $[m]_x$ is defined for all $m\in M$ and $m\mapsto [m]_x$ gives a $\Bbbk$-linear map $M\to M_x$.

Put $\til M=\coprod_{x\in \mathscr G\skel 0} M_x$ and let $p_M(M_x)=x$ for $x\in \mathscr G\skel 0$.  Let $U$ be a compact open subset of $\mathscr G\skel 0$ and let $m\in M$.  Define \[(U,m)=\{[m]_x\mid x\in U\}\subseteq \til M.\] Suppose that $[m]_x\in (U,m_1)\cap (V,m_2)$.  Then there is a compact open neighborhood  $W\subseteq U\cap V$ of $x$ such that $m\chi_W=m_1\chi_W=m_2\chi_W$.  It follows that $[m]_x\in (W,m)\subseteq (U,m_1)\cap (V,m_2)$ and hence the sets $(U,m)$ form a basis for a topology on $\til M$.  Continuity of $p_M$ follows because if $U$ is a compact open subset of $\mathscr G\skel 0$, then $p_M\inv(U)=\bigcup_{m\in M} (U,m)$ is open.  Trivially, $p_M$ takes $(U,m)$ bijectively to $U$ and is thus a local homeomorphism.

Each stalk $M_x$ is a $\Bbbk$-module.  We must show continuity of the $\Bbbk$-module structure.  To establish continuity of the zero section $x\mapsto [0]_x$, suppose that $(U,m)$ is a basic neighborhood of $[0]_x$.  Then there is a compact open neighborhood $W$ of $x$ with $W\subseteq U$ and $m\chi_W=0$.  Then, for all $z\in W$, one has $[m]_z=[0]_z$ and so the zero section maps $W$ into $(U,m)$.  Thus the zero section is continuous.

To see that scalar multiplication is continuous, let $k\in \Bbbk$ and suppose that $[kn]_x=k[n]_x\in (U,m)$.  Then there is a compact open neighborhood $W$ of $x$ with $W\subseteq U$ and $kn\chi_W=m\chi_W$.  If $(k,[n]_z)\in \{k\}\times (W,n)$, then $k[n]_z=[kn]_z=[m]_z$ because $z\in W$ and $kn\chi_W=m\chi_W$.   This yields continuity of scalar multiplication.

Continuity of addition is proved as follows.  Suppose $(U,m)$ is a basic neighborhood of $[m_1]_x+[m_2]_x=[m_1+m_2]_x$.  Then there is a compact open neighborhood $W$ of $x$ with $W\subseteq U$ and $(m_1+m_2)\chi_W=m\chi_W$.  Therefore, if $([m_1]_z,[m_2]_z)\in \left((m_1,W)\times (m_2,W)\right)\cap (\til M\times_{\mathscr G\skel 0}\til M)$, then $[m_1]_z+[m_2]_z = [m_1+m_2]_z =[m]_z\in (U,m)$. Therefore, addition is continuous.

Next, we must define the $\mathscr G$-action.  Define, for $g\in \mathscr G\skel 1$, a mapping $R_g\colon M_{\ran(g)}\to M_{\dom(g)}$ by $R_g([m]_{\ran(g)}) = [m\chi_U]_{\dom(x)}$ where $U$ is a compact local bisection containing $g$.  We also write $R_g([m]_{\ran(g)}) = [m]_{\ran(g)}g$.

\begin{Prop}\label{p:actionprops}
The following hold.
\begin{enumerate}
\item $R_g$ is a well-defined $\Bbbk$-module homomorphism.
\item If $(g,h)\in \mathscr G\skel 2$, then $([m]_{\ran(g)}g)h = [m]_{\ran(gh)}(gh)$.
\item If $x\in \mathscr G\skel 0$, then $[m]_xx=[m]_x$.
\end{enumerate}
\end{Prop}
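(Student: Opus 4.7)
My plan is to prove parts (1), (2), (3) in order, with (1) carrying essentially all of the technical weight. Throughout, the key tool is the identity $\chi_A\chi_B = \chi_{AB}$ for $A,B \in \Bis_c(\mathscr G)$, together with the two simplifications $UV = U \cap \dom\inv(V)$ when $V \subseteq \mathscr G\skel 0$ is compact open and, dually, $WU = U \cap \ran\inv(W)$ when $W \subseteq \mathscr G\skel 0$ is compact open.

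For (1), well-definedness splits into two independent checks. First, independence of the compact local bisection $U$ containing $g$: given another such $U'$, the intersection $U \cap U'$ is an open bisection still containing $g$, so $\dom(U \cap U')$ is an open neighborhood of $\dom(g)$ in $\mathscr G\skel 0$. By ampleness, pick $V \in \mathcal B$ with $\dom(g) \in V \subseteq \dom(U \cap U')$. The injectivity of $\dom$ on each local bisection then forces $U \cap \dom\inv(V) = U' \cap \dom\inv(V)$ (both equal $(U \cap U') \cap \dom\inv(V)$), so $\chi_U \chi_V = \chi_{U'} \chi_V$, and hence $m\chi_U$ and $m\chi_{U'}$ descend to the same element of $M(V) = M\chi_V$, i.e., the same class in $M_{\dom(g)}$. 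Second, independence of representative: if $m\chi_W = m'\chi_W$ for some compact open $W \ni \ran(g)$, pick $V \in \mathcal B$ with $\dom(g) \in V \subseteq \dom(U \cap \ran\inv(W))$. The same injectivity argument gives $U \cap \dom\inv(V) \subseteq U \cap \ran\inv(W)$, equivalently $UV = (WU)V$, which rewrites as $\chi_U\chi_V = \chi_W\chi_U\chi_V$. Combining, $(m-m')\chi_U\chi_V = (m-m')\chi_W\chi_U\chi_V = 0$, so $[m\chi_U]_{\dom(g)} = [m'\chi_U]_{\dom(g)}$. The $\Bbbk$-linearity of $R_g$ is then immediate from the bilinearity of the $\Bbbk\mathscr G$-action and the fact that the direct limit defining $M_x$ is taken in the category of $\Bbbk$-modules.

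Parts (2) and (3) are short. For (2), pick compact local bisections $U \ni g$ and $V \ni h$; then $UV \in \Bis_c(\mathscr G)$ contains $gh$, and
\[R_h(R_g([m]_{\ran(g)})) \;=\; [m\chi_U\chi_V]_{\dom(h)} \;=\; [m\chi_{UV}]_{\dom(gh)} \;=\; R_{gh}([m]_{\ran(gh)}),\]
using $\ran(gh) = \ran(g)$ and $\dom(gh) = \dom(h)$. For (3), note that any compact open neighborhood $U$ of $x$ in $\mathscr G\skel 0$ is itself a compact local bisection containing $x$, so $R_x([m]_x) = [m\chi_U]_x$; and directly from the direct-limit definition of $M_x$, $[m\chi_U]_x = [m]_x$, since for any compact open $V \subseteq U$ with $x \in V$ one has $(m\chi_U)\chi_V = m\chi_{UV} = m\chi_V$.

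The main obstacle is the two well-definedness checks in (1); both follow the same template, namely shrinking $V$ inside $\mathscr G\skel 0$ using ampleness until injectivity of $\dom$ on a local bisection can be exploited to rewrite a product of characteristic functions as a single one, but the set-theoretic bookkeeping requires care.
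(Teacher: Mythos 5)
Your proof is correct and follows essentially the same route as the paper: both arguments reduce the well-definedness in (1) to the identity $\chi_A\chi_B=\chi_{AB}$ together with shrinking to a compact open neighborhood of $\dom(g)$ over which the relevant products of bisections coincide, and your parts (2) and (3) are identical to the paper's. The only difference is organizational: the paper verifies independence of the representative and of the bisection simultaneously in one chain of equalities, using an auxiliary bisection $Z\subseteq U\cap V$ and the neighborhood $Z\inv WZ$ of $\dom(g)$, whereas you split the check into two steps phrased via $\dom$-images and injectivity of $\dom$ on bisections --- both rest on the same underlying fact that a bisection restricted over the domain of a sub-bisection equals that sub-bisection.
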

\begin{proof}
Suppose that $g\colon y\to x$.  To show that $R_g$ is well defined, let $[m]_x=[n]_x$ and let $U,V\in \Bis_c(\mathscr G)$ with $g\in U\cap V$.  Then there exist a compact open neighborhood $W$ of $x$ with $m\chi_W=n\chi_W$ and $Z\in \Bis_c(\mathscr G)$ such that $g\in Z\subseteq U\cap V$.  Note that $g\in WZ\subseteq U\cap V$ and so $y\in Z\inv WZ\subseteq \mathscr G\skel 0$.  Also we compute
\begin{align*}
m\chi_U\chi_{Z\inv WZ} &= m\chi_{UZ\inv WZ}=m\chi_{U(WZ)\inv WZ}=m\chi_{WZ}=m\chi_W\chi_Z\\ &=n\chi_W\chi_Z
=n\chi_{WZ}=n\chi_{V(WZ)\inv WZ}= n\chi_{VZ\inv WZ}\\ &=n\chi_V\chi_{Z\inv WZ}
\end{align*}
which shows that $[m\chi_U]_y=[n\chi_V]_y$, i.e., $R_g$ is well defined.  Clearly $R_g$ is $\Bbbk$-linear.

Suppose now that $(g,h)\in \mathscr G\skel 2$.  Choose $U,V\in \Bis_c(\mathscr G)$ such that $g\in U$ and $h\in V$. Then $gh\in UV$ and so if $g\colon y\to x$ and $h\colon z\to y$, then \[([m]_xg)h = [m\chi_U]_yh=[m\chi_U\chi_V]_z=[m\chi_{UV}]_z=[m]_z(gh)\] as required.

Finally, if $x\in \mathscr G\skel 0$ and $U$ is a compact open neighborhood of $x$ in $\mathscr G\skel 0$, then $[m]_xx=[m\chi_U]_x=[m]_x$ by definition of $M_x$.
\end{proof}

In light of Proposition~\ref{p:actionprops}, there is an action map $\til M\times_{\mathscr G\skel 0} \mathscr G\skel 1\to \til M$ given by $([m]_{\ran(g)},g)\mapsto [m]_{\ran(g)}g$ satisfying $p_M([m]_{\ran(g)}g) = \dom(g)$. To prove that $\Sh(M)$ is a $\mathscr G$-sheaf of $\Bbbk$-modules, it remains to show the action map is continuous.  Let $g\colon y\to x$ and let $(m,U)$ be a basic neighborhood of $[n]_xg$.  Then $y\in U$ and $[m]_y=[n]_xg$. Let $V\in \Bis_c(\mathscr G)$ with $g\in V$.  Then $[n\chi_V]_y =[m]_y$ and so there exists a compact open neighborhood $W\subseteq U$ of $y$ with $n\chi_{VW}=n\chi_V\chi_W=m\chi_W$.  Note that $g\in VW$ and $x\in VWV\inv\subseteq \mathscr G\skel 0$.  Consider the neighborhood $N=\left((n,VWV\inv)\times VW\right)\cap \left(\til M\times_{\mathscr G\skel 0} \mathscr G\skel 1\right)$ of $([n]_x,g)$.  If $([n]_z,h)\in N$, with $h\colon z'\to z$, then because $h\in VW$, we have $[n]_zh=[n\chi_{VW}]_{z'} = [m\chi_W]_{z'}=[m]_{z'}\in (m,U)$ as $z'\in V\inv VW\subseteq W\subseteq U$.  This establishes that $(\til M,p)$ is a $\mathscr G$-sheaf of $\Bbbk$-modules.

Next suppose that $f\colon M\to N$ is a $\Bbbk\mathscr G$-module homomorphism. The $f(M(U)) = f(M\chi_U) = f(M)\chi_U\subseteq N\chi_U=N(U)$.  Thus there is an induced $\Bbbk$-linear map $f_x\colon M_x\to N_x$ given by $f([m]_x) = [f(m)]_x$.  Define
\[\xymatrix{\til M\ar[rd]_{p_M}\ar[rr]^{\Sh(f)} && \til N\ar[ld]^{p_N}\\ & \mathscr G\skel 0 &}\] by $\Sh(f)([m]_x)=f_x([m]_x)$.
First we check that $\Sh(f)$ preserves the action.  Suppose $g\colon y\to x$ and $U\in \Bis_c(\mathscr G)$ with $g\in U$.  Then \[f_y([m]_xg) = f_y([m\chi_U]_y) = [f(m\chi_U)]_y=[f(m)\chi_U]_y=[f(m)]_xg=f_x([m]_x)g\] as required.

It remains to check continuity of $\Sh(f)$.  Let $[m]_x\in \til M$ and let $(U,n)$ be a basic neigborhood of $f_x([m]_x)$.  Then $x\in U$ and $f_x([m]_x)=[f(m)]_x=[n]_x$.  Choose a compact open neighborhood $W$ of $x$ contained in $U$ such that $f(m)\chi_W=n\chi_W$.  Consider the neighborhood $(W,m)$ of $[m]_x$.  If $[m]_z\in (W,m)$, then $f_z([m]_z) = [f(m)]_z= [f(m)\chi_W]_z=[n\chi_W]_z=[n]_z$ because $z\in W$.  Thus $\Sh(f)(W,m)\subseteq (U,n)$, yielding the continuity of $\Sh(f)$.  It is obvious that $\Sh$ is a functor.

The following lemma will be useful for proving that these functors are quasi-inverse.

\begin{Lemma}\label{l:useful}
Let $M\in \modu \Bbbk \mathscr G$.  If $U\in \Bis_c(\mathscr G)$ and $x\notin U\inv U$, then $[m\chi_U]_x=0$.
\end{Lemma}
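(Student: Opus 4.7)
The plan is to produce, for the given $x \notin U^{-1}U$, a compact open neighborhood $W$ of $x$ such that $(m\chi_U)\chi_W = 0$; by construction of the stalk $M_x = \varinjlim_{x \in V} M(V)$, this will force $[m\chi_U]_x = 0$.

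First I would use that $U \in \Bis_c(\mathscr{G})$ to conclude that $U^{-1}U = \dom(U)$ is a compact open subset of $\mathscr{G}\skel 0$, since $\dom|_U$ is a homeomorphism onto its image. Because $\mathscr{G}\skel 0$ is Hausdorff with a basis of compact open sets and $x \notin U^{-1}U$, I can separate $x$ from $U^{-1}U$ by a compact open neighborhood $W_0$ of $x$ with $W_0 \cap U^{-1}U = \emptyset$. Using that $M = \bigcup_{V \in \mathcal{B}} M(V)$ is a directed union (Proposition~\ref{p:directedunion}), pick $V \in \mathcal{B}$ with $m\chi_U \in M(V)$ and, enlarging if needed, with $x \in V$. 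Set $W = W_0 \cap V \in \mathcal{B}$, still a compact open neighborhood of $x$ disjoint from $U^{-1}U$.

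The key computation is then $(m\chi_U)\chi_W = m\chi_{UW}$ using the identity $\chi_U \chi_W = \chi_{UW}$ from the paper. I claim $UW = \emptyset$: an arbitrary element of $UW$ has the form $uw$ with $u \in U$, $w \in W \subseteq \mathscr{G}\skel 0$ and $\dom(u) = \ran(w) = w$, so $uw = u$ and $w = \dom(u) = u^{-1}u \in U^{-1}U$; but $W \cap U^{-1}U = \emptyset$, a contradiction. Hence $\chi_{UW} = 0$ and thus $(m\chi_U)\chi_W = 0$ inside $M(W)$. Since $\rho^V_W(m\chi_U) = (m\chi_U)\chi_W = 0$ with $x \in W \subseteq V$, the image of $m\chi_U$ vanishes in the direct limit, giving $[m\chi_U]_x = 0$.

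The only subtlety — hardly an obstacle — is keeping the direct limit bookkeeping straight: one must make sure the representative $m\chi_U$ and the neighborhood $W$ live in a common $M(V)$ so that applying $\rho^V_W$ is legitimate, which is handled by directedness of $\mathcal{B}$. The heart of the argument is the set-theoretic observation that $UW = \emptyset$ whenever $W$ is disjoint from $\dom(U) = U^{-1}U$, which is precisely what the hypothesis $x \notin U^{-1}U$ makes available after a separation step.
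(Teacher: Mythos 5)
Your proposal is correct and follows essentially the same route as the paper: separate $x$ from the compact set $U\inv U$ by a compact open $W$ and compute $m\chi_U\chi_W=m\chi_{UW}=0$. The extra detail you supply (the verification that $UW=\emptyset$ and the direct-limit bookkeeping) is exactly what the paper leaves implicit.
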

\begin{proof}
Since $U\inv U$ is compact and $\mathscr G\skel 0$ is Hausdorff, we can find a compact open neighborhood $W$ of $x$ with $W\cap U\inv U=\emptyset$.  Then $m\chi_U\chi_W=m\chi_{UW}=0$.
\end{proof}

\begin{Thm}\label{t:equivalence}
There are natural isomorphisms $\Gamma_c\circ \Sh\cong 1_{\modu \Bbbk G}$ and $\Sh\circ \Gamma_c\cong 1_{\mathcal B_\Bbbk\mathscr G}$.  Hence the categories $\modu \Bbbk \mathscr G$ and $\mathcal B_\Bbbk\mathscr G$ are equivalent.
\end{Thm}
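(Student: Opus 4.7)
The plan is to construct explicit unit and counit natural transformations and verify they are isomorphisms.

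For the unit $\eta_M\colon M\to \Gamma_c(\Sh(M))$, I would set $\eta_M(m)(x)=[m]_x$. Since $M$ is unitary, choose $U\in\mathcal B$ with $m\chi_U=m$; Lemma~\ref{l:useful} then forces $[m]_x=0$ for $x\notin U$, so $\eta_M(m)$ has compact support, and continuity as a section of $p_M$ is immediate from the definition of the topology on $\til M$ since $y\mapsto[m]_y$ lands in the basic open $(U,m)$ for $y\in U$. To check $\Bbbk\mathscr G$-linearity, I would reduce by linearity to verifying $\eta_M(m\chi_V)=\eta_M(m)\chi_V$ for $V\in\Bis_c(\mathscr G)$, which follows pointwise from Proposition~\ref{p:bisecaction} and the definition of $R_g$ on $\til M$. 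Naturality in $M$ is immediate from the formula $\Sh(f)([m]_x)=[f(m)]_x$ built into the construction.

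For injectivity of $\eta_M$, if $[m]_x=0$ for every $x$, pick $U\in\mathcal B$ with $m\chi_U=m$ and cover $U$ by compact open neighborhoods $W_x\ni x$ with $m\chi_{W_x}=0$; a finite subcover and a disjoint refinement in $\mathcal B$ then yield $m\chi_U=0$. Surjectivity is the main technical step. Given $s\in\Gamma_c(\Sh(M))$, I would cover $\supp(s)$ by finitely many basic opens $(V_i,m_i)$ with $V_i\in\mathcal B$, so that $s(y)=[m_i]_y$ for $y\in V_i$, and refine to pairwise disjoint $V_1',\ldots,V_n'\in\mathcal B$ with $\bigcup V_i'=\bigcup V_i$. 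One checks directly from the direct limit definition that $[m_i\chi_{V_i'}]_y=[m_i]_y=s(y)$ for $y\in V_i'$, while Lemma~\ref{l:useful} gives $[m_i\chi_{V_i'}]_y=0$ off $V_i'$; hence $\eta_M\bigl(\sum_i m_i\chi_{V_i'}\bigr)=s$.

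For the counit $\epsilon_{(E,p)}\colon \Sh(\Gamma_c(E,p))\to (E,p)$ I would set $\epsilon([s]_x)=s(x)$. Well-definedness comes from Proposition~\ref{p:bisecaction}: $[s]_x=[s']_x$ means $s\chi_W=s'\chi_W$, i.e., $s|_W=s'|_W$, for some compact open $W\ni x$. The map is clearly $\Bbbk$-linear on stalks and preserves the $\mathscr G$-action by a one-line calculation using Proposition~\ref{p:bisecaction} and the definition of $R_g$ on $\til M$. On each basic open $(U,s)$ the restriction of $\epsilon$ factors as $(U,s)\xrightarrow{\sim}U\xrightarrow{s|_U}s(U)\subseteq E$, and since $s|_U$ is a continuous section of the local homeomorphism $p$ it is an open embedding, so $\epsilon$ is a local homeomorphism. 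Bijectivity on stalks then promotes $\epsilon$ to a homeomorphism: injectivity is well-definedness in reverse; for surjectivity, given $e\in E_x$, use that $p$ is a local homeomorphism to find a local section $\sigma$ on some compact open neighborhood $U\ni x$ with $\sigma(x)=e$, and extend $\sigma$ by the zero section outside $U$. Continuity of the extension uses that $U$ is clopen (Hausdorff unit space with compact open basis), yielding a global compactly supported $s$ with $\epsilon([s]_x)=e$. Naturality in $(E,p)$ follows from $\Gamma_c(\varphi)(s)=\varphi\circ s$.

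The main obstacle will be the surjectivity argument for $\eta_M$: one has to move carefully between $M$, its stalks $M_x$, and compactly supported sections of $\til M$, and produce a genuine preimage in $M$ from purely local information, which requires the disjoint refinement in $\mathcal B$ together with Lemma~\ref{l:useful} to control the behavior off each $V_i'$. The extension-by-zero step in the stalkwise surjectivity of $\epsilon$ is the other place where Hausdorffness of $\mathscr G\skel 0$ combined with a compact open basis is genuinely used.
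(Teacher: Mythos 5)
Your proposal is correct and follows the paper's construction of the unit and counit essentially verbatim, with one genuinely different step: the injectivity of $\eta_M$. The paper argues contrapositively via Stone duality: given $0\neq m$ with $m\chi_U=m$, the set of compact open $V\subseteq U$ with $m\chi_V=0$ is a proper ideal of the Boolean ring of compact open subsets of $U$, hence lies in a maximal ideal, whose corresponding point $x$ satisfies $[m]_x\neq 0$. You instead assume $[m]_x=0$ for all $x$, cover $U$ by compact open $W_x$ with $m\chi_{W_x}=0$, and use a finite subcover plus a disjoint compact open refinement to write $\chi_U$ as a sum of characteristic functions each of which kills $m$, forcing $m=m\chi_U=0$. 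Both arguments are valid; yours is more elementary and reuses the same partition-refinement device already needed for the surjectivity of $\eta_M$ (and available since $\mathscr G\skel 0$ is Hausdorff with a basis of compact opens), while the paper's Stone duality argument has the virtue of exhibiting an explicit stalk at which $m$ survives, which fits the spectral point of view on $\til M$. The remaining steps --- compact support of $s_m$ via Lemma~\ref{l:useful}, the module-homomorphism check via Proposition~\ref{p:bisecaction}, the surjectivity of $\eta_M$ by disjoint refinement, and the treatment of $\varepsilon_{(E,p)}$ including the clopen extension-by-zero for stalkwise surjectivity --- match the paper's proof.
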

\begin{proof}
Let $M$ be a unitary $\Bbbk \mathscr G$-module and define $\eta_M\colon M\to \Gamma_c(\Sh(M))$ by $\eta_M(m) = s_m$ where $s_m(x) = [m]_x$ for all $x\in X$.  We claim that $s_m$ is continuous with compact support.  Continuity is easy:  if $s_m(x)\in (U,n)$, then $x\in U$ and $[m]_x=[n]_x$.  So there is a compact open neighborhood $W$ of $x$ with $W\subseteq U$ and $m\chi_W=n\chi_W$.  Then if $z\in W$, we have $s_m(z)=[m]_z=[m\chi_W]_z=[n\chi_W]_z=[n]_z\in (U,n)$.  Thus $s_m$ is continuous.    We claim that the support of $s_m$ is compact.  Let $U\in \mathcal B$ with $m\chi_U=m$.  Suppose that $x\notin U$. Then Lemma~\ref{l:useful} implies that $[m]_x=[m\chi_U]_x=0$. Thus the support of $s_m$ is a closed subset of $U$ and hence compact.

We claim that $\eta_M$ is an isomorphism (it is clearly natural in $M$).  Let us first show that $\eta_M$ is a module homomorphism.  It is clearly $\Bbbk$-linear and hence it suffices to show that if $U\in \Bis_c(\mathscr G)$, then $\eta_M(m\chi_U) = \eta_M(m)\chi_U$. Note that $\eta_M(m\chi_U) = s_{m\chi_U}$.   If $x\notin U\inv U$, then $s_{m\chi_U}(x) = [m\chi_U]_x=0$ by Lemma~\ref{l:useful}. If $x=\dom(g)$ with $g\in U$, then we have $s_{m\chi_U}(x) = [m\chi_U]_x=[m]_{\ran(g)}g=s_m(\ran(g))g$.  Therefore, in light of Proposition~\ref{p:bisecaction}, we conclude that $s_{m\chi_U}=s_m\chi_U$.
This shows that $\eta_M$ is a $\Bbbk \mathscr G$-module homomorphism.

Suppose that $0\neq m\in M$. Then $m\chi_U=m$ for some $U\in \mathcal B$.  Let $\mathcal B_U$ be the boolean ring of compact open subsets of $U$.  Let $I$ be the ideal of $\mathcal B_U$ consisting of those $V$ with $m\chi_V=0$.   This is a proper ideal (since $U\notin I$) and hence contained in a maximal ideal $\mathfrak m$. Let $x$ be the point of $U$ corresponding to $\mathfrak m$ under Stone duality.  Then the ultrafilter of compact open neighborhoods of $x$ is $\mathcal B\setminus \mathfrak m$ and hence does not intersect $I$. Therefore, $m\chi_V\neq 0$ for all compact open neighborhoods of $x$, that is, $s_m(x)=[m]_x\neq 0$. Therefore, $\eta_M(m)=s_m\neq 0$ and so $\eta_M$ is injective.

To see that $\eta_M$ is surjective, let $s\in \Gamma_c(\Sh(M))$ and let $K$ be the support of $s$. For each $x\in K$, we can find a compact open neighborhood $U_x$ of $x$ and an element $m_x\in M$ such that $s(z)=[m_x]_z$ for all $z\in U_x$ (choose $U_x$ mapping under $s$ into a basic neighborhood of $s(x)$ of the form $(V_x,m_x)$).  By compactness of $K$, we can find a finite subcover of the $U_x$ with $x\in X$.  Since $\mathscr G\skel 0$ is Hausdorff, we can refine the subcover by a partition into compact open subsets, that is, we can find disjoint compact open sets $V_1,\ldots, V_n$ and elements $m_1,\ldots, m_n\in M$ such that $K\subseteq V_1\cup\cdots\cup V_n$ and $s(x)=[m_i]_x$ for all $x\in V_i$.  Consider $m=m_1\chi_{V_1}+\cdots+m_n\chi_{V_n}$.  Then $m\chi_{V_i} = m_i\chi_{V_i}$ and so $[m]_x=[m_i]_x=s(x)$ for all $x\in V_i$.  We conclude that $[m]_x=s(x)$ for all $x\in V_1\cup\cdots\cup V_n$.  If $x\notin V_1\cup\cdots\cup V_n$, then $x\notin K$ and so $s(x)=0$.  But also $[m]_x=\sum_{i=1}^n [m_i\chi_{V_i}]_x=0$ by Lemma~\ref{l:useful}.  Thus $s(x) = [m]_x=s_m(x)$ for all $x\in \mathscr G\skel 0$ and hence $s=\eta_M(m)$.  This concludes the proof $\eta_M$ is an isomorphism.

Next let $(E,p)$ be a $\mathscr G$-sheaf of $\Bbbk$-modules and put $M=\Gamma_c(E,p)$.  We define an isomorphism
\[\xymatrix{\til M\ar[rd]_{p_M}\ar[rr]^{\varepsilon_{(E,p)}} &  &E\ar[ld]^p\\ & \mathscr G\skel 0 &}\] of $\Sh(\Gamma_c(E,p))$ and $(E,p)$ as follows.  Define $\varepsilon_{(E,p)}([s]_x)=s(x)$ for $s\in M$ and $x\in \mathscr G\skel 0$.  This is well defined because if $s\chi_U=s'\chi_U$ for some compact open neighborhood $U$ of $x$, then $s(x) =s'(x)$ by Proposition~\ref{p:bisecaction}.  Also, $p(s(x))=x=p_M([s]_x)$, whence $p\circ\varepsilon_{(E,p)}=p_M$. Clearly, $\varepsilon_{(E,p)}$ restricts to a $\Bbbk$-module homomorphism on each fiber.  Let $g\colon y\to x$ and suppose $U\in \Bis_c(\mathscr G)$ with $g\in U$, then $\varepsilon_{(E,p)}([s]_xg) = \varepsilon_{(E,p)}([s\chi_U]_y)=(s\chi_U)(y) = s(x)g=\varepsilon_{(E,p)}([s]_x)g$ (using Proposition~\ref{p:bisecaction}). It therefore remains to prove that $\varepsilon_{(E,p)}$ is a homeomorphism.

To see that $\varepsilon_{(E,p)}$ is continuous, let $[s]_x\in \til M$ and let $U$ be a neighborhood of $\varepsilon_{(E,p)}([s]_x)=s(x)$. Let $W$ be a compact open neighborhood of $x$ with $s(W)\subseteq U$.  Consider the neighborhood $(W,s)$ of $[s]_x$.  Then, for $[s]_z\in (W,s)$, we have $\varepsilon_{(E,p)}([s_z])=s(z)\in U$.  Thus $\varepsilon_{(E,p)}$ is continuous. As $p,p_M$ are local homeomorphisms, we deduce from $p\circ\varepsilon_{(E,p)}=p_M$ that $\varepsilon_{(E,p)}$ is a local homeomorphism and hence open.  It remains to prove that $\varepsilon_{(E,p)}$ is bijective.

Suppose that $s(x)=\varepsilon_{(E,p)}([s]_x) =\varepsilon_{(E,p)}([t]_x)=t(x)$. Choose a neighborhood $U$ of $s(x)=t(x)$ such that $p|_U$ is a homeomorphism onto its image.  Let $W$ be a compact open neighborhood of $x$ such that both $s(W)\subseteq U$ and $t(W)\subseteq U$.  Then if $z\in W$, we have $p(s(z))=z=p(t(z))$ and $s(z),t(z)\in U$ and hence $s(z)=t(z)$.  Thus $s\chi_W=t\chi_W$ (cf.~Proposition~\ref{p:bisecaction}) and so $[s]_x=[t]_x$.  This yields injectivity of $\varepsilon_{(E,p)}$.  Next let $e\in E_x$. Let $U$ be neighborhood of $e$ such that $p|_U\colon U\to p(U)$ is a homeomorphism.  Let $W$ be a compact open neighborhood of $x$ contained in $p(U)$ and define $s\in \Gamma_c(E,p)$ to agree with $(p|_U)\inv$ on $W$ and be $0$ outside of $W$.  Then $\varepsilon_{(E,p)}([s]_x) = s(x)=e$.  Clearly, $\varepsilon_{(E,p)}$ is natural $(E,p)$.  This completes the proof.
\end{proof}

As a corollary, we recover the main result of~\cite{GroupoidMorita}, and moreover we do not require the Hausdoff assumption.

\begin{Cor}
Let $\mathscr G$ and $\mathscr H$ be Morita equivalent ample groupoids.  Then $\Bbbk \mathscr G$ is Morita equivalent to $\Bbbk\mathscr H$ for any commutative ring with unit $\Bbbk$.
\end{Cor}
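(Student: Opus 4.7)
The plan is to chain together three equivalences of categories, using Moerdijk's theorem for topoi as the key input.

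First, I invoke Moerdijk's theorem (recalled just before Section 2.3) which asserts that if $\mathscr G$ and $\mathscr H$ are Morita equivalent \'etale groupoids, then the classifying topoi $\mathcal B\mathscr G$ and $\mathcal B\mathscr H$ are equivalent as categories. This is where the assumption of groupoid Morita equivalence gets used, and importantly it imposes no Hausdorffness hypothesis on the arrow spaces, which is what allows us to drop the Hausdorff assumption from the original result in~\cite{GroupoidMorita}.

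Second, I need to transport this equivalence through the constant-sheaf functor to the $\Bbbk$-linear setting. The paper already observed that because $\Delta$ is left adjoint to the global sections functor out of the terminal object, any equivalence $\mathcal B\mathscr G\simeq\mathcal B\mathscr H$ commutes (up to natural isomorphism) with $\Delta$, and in particular identifies $\Delta(\Bbbk)$ on one side with $\Delta(\Bbbk)$ on the other as internal commutative rings. Since $\mathscr G$-sheaves of $\Bbbk$-modules are by definition internal $\Delta(\Bbbk)$-modules, the equivalence restricts to an equivalence $\mathcal B_\Bbbk\mathscr G\simeq\mathcal B_\Bbbk\mathscr H$.

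Third, I apply Theorem~\ref{t:equivalence} twice, once for $\mathscr G$ and once for $\mathscr H$, to obtain
\[\modu\Bbbk\mathscr G\;\simeq\;\mathcal B_\Bbbk\mathscr G\;\simeq\;\mathcal B_\Bbbk\mathscr H\;\simeq\;\modu\Bbbk\mathscr H.\]
Since $\Bbbk\mathscr G$ and $\Bbbk\mathscr H$ are rings with local units (Proposition~\ref{p:directedunion}), the existence of such an equivalence between the categories of unitary right modules is precisely the definition of Morita equivalence of rings with local units (see~\cite{Abrams,Laci,Simon}), so we conclude that $\Bbbk\mathscr G$ and $\Bbbk\mathscr H$ are Morita equivalent.

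There is no real obstacle here: the new content is entirely contained in Theorem~\ref{t:equivalence}, and the corollary is a formal consequence. The only point that deserves a careful sentence is the observation that the topos equivalence automatically yields an equivalence at the level of $\Bbbk$-module sheaves, but as just noted this follows from the fact that $\Delta$ is characterized as a left adjoint and is therefore preserved up to natural isomorphism by any equivalence of topoi. This is also precisely the reason the argument is ``more conceptual'' than the Morita context construction of~\cite{GroupoidMorita}: one never has to construct a bimodule by hand.
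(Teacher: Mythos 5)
Your proof is correct and is precisely the argument the paper intends: Moerdijk's theorem gives $\mathcal B\mathscr G\simeq\mathcal B\mathscr H$, the remark that $\Delta$ is preserved by equivalences (being left adjoint to the hom functor out of the terminal object) upgrades this to $\mathcal B_\Bbbk\mathscr G\simeq\mathcal B_\Bbbk\mathscr H$, and Theorem~\ref{t:equivalence} converts this into an equivalence $\modu\Bbbk\mathscr G\simeq\modu\Bbbk\mathscr H$. Nothing is missing.
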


By restricting to the case where $\mathscr G\skel 1=\mathscr G\skel 0$, we also have the following folklore result.

\begin{Cor}
Let $X$ be a Hausdorff space with a basis of compact open subsets and $\Bbbk$ a commutative ring with unit.  Let $C_c(X,\Bbbk)$ be the ring of locally constant functions $X\to \Bbbk$ with compact support.  Then the category of sheaves of $\Bbbk$-modules on $X$ is equivalent to the category of unitary $C_c(X,\Bbbk)$-modules.
\end{Cor}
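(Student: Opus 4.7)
The plan is to obtain this corollary as a direct specialization of Theorem~\ref{t:equivalence}, by viewing the space $X$ as an ample groupoid with only identity arrows. First I would verify that the trivial groupoid $\mathscr G$ defined by $\mathscr G\skel 0 = X = \mathscr G\skel 1$, with $\dom$, $\ran$, and multiplication all equal to the identity, is indeed an ample groupoid: the domain map is trivially a local homeomorphism, the unit space is Hausdorff by hypothesis, and a basis of compact open sets is provided by the hypothesis on $X$. In this case every open subset of $\mathscr G\skel 1 = X$ is a local bisection, so $\Bis_c(\mathscr G)$ coincides with the collection of compact open subsets of $X$.

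Next I would identify the groupoid algebra. Since $\mathscr G\skel 1 = \mathscr G\skel 0$ is Hausdorff, $\Bbbk\mathscr G$ consists of the locally constant functions $X\to \Bbbk$ with compact support, i.e., $\Bbbk\mathscr G = C_c(X,\Bbbk)$ as a $\Bbbk$-module. The convolution product degenerates: for composable $(g,h)$ one must have $g = h$ a unit, so the formula for $f_1 \ast f_2(g)$ collapses to the single term $f_1(g)f_2(g)$. Thus convolution equals pointwise multiplication, and $\Bbbk\mathscr G = C_c(X,\Bbbk)$ as a $\Bbbk$-algebra.

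Finally I would identify the sheaf category. A $\mathscr G$-sheaf of $\Bbbk$-modules is a local homeomorphism $p\colon E \to \mathscr G\skel 0 = X$ equipped with a $\Bbbk$-module structure on each stalk making addition, scalar multiplication, and the zero section continuous, together with a $\mathscr G$-action $R_g\colon E_{\ran(g)} \to E_{\dom(g)}$. Because every arrow is a unit, the action condition forces each $R_g$ to be the identity, so the data of a $\mathscr G$-action is vacuous. What remains is precisely the data of a sheaf of $\Bbbk$-modules on $X$ in the \'etale-space formulation, and morphisms reduce accordingly. Invoking Theorem~\ref{t:equivalence} then yields the desired equivalence between $\modu C_c(X,\Bbbk)$ and the category of sheaves of $\Bbbk$-modules on $X$.

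There is no real obstacle; the only point that requires care is the unwinding of the convolution formula and of the $\mathscr G$-action condition when the groupoid has no non-identity arrows, both of which are routine verifications.
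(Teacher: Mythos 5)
Your proposal is correct and is exactly the paper's intended argument: the paper derives this corollary "by restricting to the case where $\mathscr G\skel 1=\mathscr G\skel 0$," having already noted in the groupoid algebras section that convolution then degenerates to pointwise multiplication and $\Bbbk\mathscr G = C_c(\mathscr G\skel 0,\Bbbk)$. Your additional verifications (that such an $X$ is an ample groupoid and that the $\mathscr G$-action on a sheaf is vacuous) are the routine details the paper leaves implicit.
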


If $\mathscr G$ is a discrete groupoid, then $\mathcal B\mathscr G$ is equivalent to the category $\mathbf{Set}^{\mathscr G^{op}}$ of contravariant functors from $\mathscr G$ to the category of sets~\cite{elephant1,elephant2}.  Therefore, $\mathcal B_{\Bbbk}\mathscr G$ is equivalent to the category $(\modu \Bbbk)^{\mathscr G^{op}}$ of contravariant functors from $\mathscr G$ to $\modu \Bbbk$.  It is well known that $(\modu \Bbbk)^{\mathscr G^{op}}$ is equivalent to $\modu \Bbbk\mathscr G$ when $\mathscr G\skel 0$ is finite~\cite{ringoids} and presumably the following extension is also well known, although the author doesn't know a reference.

\begin{Cor}
Let $\mathscr G$ be a discrete groupoid and $\Bbbk$ a commutative ring with unit.  Then $\modu \Bbbk\mathscr G$ is equivalent to the category $(\modu \Bbbk)^{\mathscr G^{op}}$  of contravariant functors $\mathscr G\to \modu \Bbbk$.  Hence naturally equivalent discrete groupoids have Morita equivalent algebras.
\end{Cor}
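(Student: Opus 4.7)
The plan is to combine Theorem \ref{t:equivalence} with an explicit identification, in the discrete case, of $\mathcal B_\Bbbk\mathscr G$ with the functor category $(\modu \Bbbk)^{\mathscr G^{op}}$.

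First I would observe that any discrete groupoid $\mathscr G$ is ample: the unit space $\mathscr G\skel 0$ carries the discrete topology, which is Hausdorff, and the singletons $\{x\}$ form a basis of compact open sets. Hence Theorem \ref{t:equivalence} applies and yields $\modu \Bbbk\mathscr G \simeq \mathcal B_\Bbbk\mathscr G$.

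Next I would unwind the definition of a $\mathscr G$-sheaf of $\Bbbk$-modules when $\mathscr G\skel 1$ and $\mathscr G\skel 0$ are both discrete. The map $p\colon E\to \mathscr G\skel 0$ being a local homeomorphism forces $E$ to be discrete, so the data $(E,p)$ reduces to a family of sets $(E_x)_{x\in \mathscr G\skel 0}$, one per unit, and the sheaf-of-$\Bbbk$-modules structure equips each $E_x$ with a $\Bbbk$-module structure (with the continuity of zero, addition, and scalar multiplication being automatic). The $\mathscr G$-action is then the same as the assignment, to each $g\in \mathscr G\skel 1$, of a $\Bbbk$-linear map $R_g\colon E_{\ran(g)}\to E_{\dom(g)}$ satisfying $R_x=1_{E_x}$ and $R_{gh}=R_h\circ R_g$ (continuity of the action map is free in the discrete setting). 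This is exactly the data of a contravariant functor $F\colon \mathscr G\to \modu \Bbbk$ with $F(x)=E_x$ and $F(g)=R_g$; morphisms of $\mathscr G$-sheaves of $\Bbbk$-modules correspond bijectively to natural transformations between the associated functors. This gives an isomorphism of categories $\mathcal B_\Bbbk\mathscr G \cong (\modu \Bbbk)^{\mathscr G^{op}}$, which combines with Theorem \ref{t:equivalence} to prove the first statement.

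For the ``hence'' clause, I would invoke the standard fact that an equivalence of (small) categories $f\colon \mathscr G\to \mathscr H$ induces an equivalence of functor categories $f^*\colon (\modu\Bbbk)^{\mathscr H^{op}}\to (\modu\Bbbk)^{\mathscr G^{op}}$ by precomposition. Stringing this together with the equivalences just established yields $\modu\Bbbk\mathscr G\simeq \modu\Bbbk\mathscr H$, which is the definition of Morita equivalence of the algebras $\Bbbk\mathscr G$ and $\Bbbk\mathscr H$. There is no real obstacle here: the argument is bookkeeping once Theorem \ref{t:equivalence} is in hand. The only point requiring a little care is that $\Bbbk\mathscr G$ is typically non-unital when $\mathscr G\skel 0$ is infinite, so the condition ``unitary'' is essential on the module side; but Theorem \ref{t:equivalence} has already handled this, so one gets the full correspondence with no finiteness assumption, strictly extending the classical case of finitely many objects.
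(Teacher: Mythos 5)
Your proposal is correct and follows essentially the same route as the paper: identify $\mathcal B_\Bbbk\mathscr G$ with $(\modu\Bbbk)^{\mathscr G^{op}}$ in the discrete case and then apply Theorem~\ref{t:equivalence}, with the ``hence'' clause following from the fact that equivalent categories have equivalent presheaf categories. The only (harmless) difference is that you verify the identification of discrete $\mathscr G$-sheaves of $\Bbbk$-modules with contravariant functors by hand, whereas the paper cites the standard topos-theoretic fact that $\mathcal B\mathscr G\simeq \mathbf{Set}^{\mathscr G^{op}}$ for discrete $\mathscr G$.
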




\end{document}